\newtheorem{theorem}{Theorem}[section]
\newtheorem{lemma}[theorem]{Lemma}
\newtheorem{proposition}[theorem]{Proposition}
\newenvironment{proof}{    
  \noindent
  \textbf{Proof.}}{
  \hfill $\Box$
  \vspace{3mm}
}
\numberwithin{equation}{section}
\newcommand{\N}{\mathbb{N}} 
\newcommand{\C}{\mathbb{C}} 
\begin{document}

\title{The spectrum of  Volterra operators on weighted spaces of entire functions}

\author{Jos\'{e} Bonet  }

\date{}

\maketitle

\begin{abstract}
We investigate the spectrum of the Vol\-terra operator $V_g$ with symbol an entire function $g$, when it acts on weighted Banach spaces $H_v^{\infty}(\C)$ of entire functions with sup-norms and when it acts on H\"ormander algebras $A_p$ or $A^0_p$.
\end{abstract}

\renewcommand{\thefootnote}{}
\footnotetext{\emph{2010 Mathematics Subject Classification.}
Primary: 47G10, secondary: 30D15, 30D20, 46E15, 47B07, 47B37, 47B38.}%
\footnotetext{\emph{Key words and phrases.} Spectrum; integral operator; Volterra operator;  entire functions; growth conditions; weighted Banach spaces of entire functions; H\"ormander algebras}


\section{Introduction, notation and preliminaries}
\label{sec1}

The aim of this note is to investigate the spectrum of the Volterra operator when it acts continuously on a weighted Banach space of entire functions $H^\infty_v(\C)$. Our main result is Theorem \ref{mainBanachspectrum}. The characterizations of continuous Volterra operators on $H^\infty_v(\C)$ obtained by Taskinen and the author in \cite{BoTask} play an important role. The present research originated with a question of A.\ Aleman in a meeting celebrated in Granada in February 2015 in which the results in \cite{BoTask} were presented. In Section \ref{spectraalgebras} we study the spectrum of Volterra operators acting on a (DFN) H\"ormander algebra $A_p$ in Theorem \ref{VolterraAp}, and on a Fr\'echet H\"ormander algebra $A^0_p$ in Theorem \ref{VolterraA0p}.

In what follows $H(\C)$ denotes the space of entire functions. The space $H(\C)$ is a Fr\'echet space endowed with the compact open topology. The differentiation operator $Df(z)=f'(z)$,
the integration operator $Jf(z)=\int_0^z f(\zeta)d\zeta$ and the multiplication operator $M_h(f)= hf, h \in H(\C),$ are continuous on $H(\C).$

Given a non-constant entire function $g \in H(\C)$ with $g(0)=0$, the \textit{Volterra operator $V_g$ with symbol $g$} is defined on $H(\C)$ by $$
V_g(f)(z):= \int_0^z f(\zeta)g'(\zeta)d\zeta \ \ \ \ (z \in \C).
$$
For $g(z)=z$ this reduces to the integration operator. Clearly $V_g$ defines a continuous operator on $H(\C)$. The Volterra operator for holomorphic functions on the unit disc was introduced by Pommerenke \cite{Po} and he proved that $V_g$ is bounded on the Hardy space $H^2$, if and only if $g \in BMOA$. Aleman and Siskakis \cite{AS1} extended this result for $H^p, 1 \leq p < \infty,$ and they considered later in \cite{AS2} the case of weighted Bergman spaces; see also \cite{PauPe}. We refer the reader to the memoir by Pel\'aez and R\"atty\"a \cite{PR} and the references therein. Volterra operators on weighted Banach spaces of holomorphic functions on the disc of type $H^\infty$ have been investigated recently in \cite{BCHMP}. Constantin started in \cite{C} the study of the Volterra operator on spaces of entire functions. She characterized the continuity of $V_g$ on the classical Fock spaces and investigated its spectrum. Constantin and Pel\'aez \cite{CP} characterize the entire functions $g \in H(\C)$ such that $V_g$ is bounded or compact on a large class of Fock spaces induced by smooth radial weights. See also \cite{BoTask}. Aleman and Constantin \cite{AC} and Aleman and Pel\'aez \cite{AP} investigate the spectra of Volterra operators on several spaces of of holomorphic functions on the disc.

A {\it weight} $v$   is a continuous function  $v: [0, \infty[ \to ]0,  \infty [$, which is non-increasing on $[0,\infty[$ and satisfies
$\lim_{r \rightarrow \infty} r^m v(r)=0$ for each $m \in \N$. We extend $v$ to $\C$ by $v(z):= v(|z|)$. For
such a weight, the {\it weighted Banach space of entire functions}
is defined by
\begin{center}
$H^\infty_v(\C) := \{ f \in H(\C) \ | \  \Vert f \Vert_v := \sup_{z \in \C} v(|z|) |f(z)| <
 \infty \}$,
\end{center}
and it is  endowed with the weighted sup norm $\Vert \cdot  \Vert_v .$ Changing the value of $v$ on a compact interval does not change the space and gives an equivalent norm. Spaces of this type appear in the study of growth conditions of analytic functions and have been investigated
in various articles, see e.g. \cite{BiBoGal,BBT,Lusk} and the references
therein.

For an entire function $f \in H(\C)$, we denote $M(f,r):= \max\{|f(z)| \ | \ |z|=r\}$. Using the notation $O$ and $o$ of Landau, $f \in H_v^\infty(\C)$ if and only if $M(f,r)=O(1/v(r)), r \rightarrow \infty$. By $C$, $C'$, $c$ etc.
we denote  positive constants, the value of  which may vary from place to place.

Let $T:X \rightarrow X$ be a continuous operator on a space $X$. The \textit{resolvent} of $T$ on $X$ is the set $\rho(T,X)$ of all $\lambda \in \C$ such that $T - \lambda I: X \rightarrow X$ is bijective and has a continuous inverse. Here $I$ stands for the identity operator on $X$. The \textit{spectrum} $\sigma(T,X)$ of $T$ is the complement in $\C$ of the resolvent. The point spectrum is the set $\sigma_{pt}(T,X)$ of those $\lambda \in \C$ such that $T - \lambda I$ is not injective. Observe that in this paper we consider operators defined not only on Banach spaces, but also on more general spaces: $H(\C)$ and the H\"ormander algebra $A^0_p$ are Fr\'echet spaces and the H\"ormander algebra $A_p$ is the dual of a Fr\'echet space. Accordingly, when we refer to a space, we mean a Hausdorff locally convex space. We refer the reader to \cite{Meise_Vogt_1997_Introduction} for results and terminology about functional analysis.

At this point we include some preliminary results, in particular about the spectrum of the Volterra operator on $H(\C)$ that are inspired by \cite{AC} and \cite{AP}. Let  $g \in H(\C)$ be a non-constant entire function such that $g(0)=0$ and let $V_gf(z)= \int_0^z f(\zeta)g'(\zeta)d\zeta, \ z \in \C,$ denote the Volterra operator associated with $g$, that acts continuously on $H(\C)$.

\begin{proposition}\label{elementarypt}
The operator $V_g - \lambda I: H(\C) \rightarrow H(\C)$ is injective for each $\lambda \in \C$. In particular $\sigma_{pt}(V_g,H(\C)) = \emptyset$. Moreover, $0 \in \sigma(V_g,H(\C))$.
\end{proposition}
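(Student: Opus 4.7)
The plan is to handle the two assertions separately. For injectivity, I would split according to whether $\lambda = 0$.

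First, suppose $\lambda = 0$ and $V_g f = 0$, i.e.\ $\int_0^z f(\zeta) g'(\zeta)\, d\zeta = 0$ for every $z \in \C$. Differentiating gives $f(z)g'(z) = 0$ on $\C$. Since $g$ is non-constant, $g'$ is a nonzero entire function whose zero set is discrete. Hence $f$ vanishes on the complement of this discrete set, so by continuity $f \equiv 0$. Thus $\ker V_g = \{0\}$.

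Next, suppose $\lambda \neq 0$ and $V_g f = \lambda f$. Evaluating at $z = 0$ yields $f(0) = 0$. Differentiating the identity gives the linear first-order ODE
\[
\lambda f'(z) = g'(z) f(z), \qquad f(0) = 0.
\]
By uniqueness of solutions of linear ODEs on $\C$ (equivalently, one notes that any nontrivial solution is a multiple of $\exp(g(z)/\lambda)$, which does not vanish at $0$), the only entire solution is $f \equiv 0$. Combining both cases, $V_g - \lambda I$ is injective for every $\lambda$, whence $\sigma_{pt}(V_g, H(\C)) = \emptyset$.

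For the last assertion, the key observation is that every function in the range of $V_g$ vanishes at the origin: $(V_g f)(0) = \int_0^0 f(\zeta) g'(\zeta)\,d\zeta = 0$. Therefore $V_g: H(\C) \to H(\C)$ is not surjective (for instance, the constant function $1$ is not in its range), so $V_g$ is not invertible and $0 \in \sigma(V_g, H(\C))$. No step here is really an obstacle; the main point is simply to recognise the ODE reduction for $\lambda \neq 0$ and the built-in initial-value constraint $(V_g f)(0) = 0$ for $\lambda = 0$.
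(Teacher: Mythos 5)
Your proposal is correct and follows essentially the same route as the paper: differentiate the eigenvalue equation to get $fg'=\lambda f'$, dispose of $\lambda=0$ using that $g'\not\equiv 0$, use the initial condition $f(0)=0$ together with the explicit solution $C\exp(g/\lambda)$ for $\lambda\neq 0$, and note that $V_g$ is not surjective because every function in its range vanishes at the origin. No substantive differences.
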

\begin{proof}
If $(V_g - \lambda I)f = 0$, then $\lambda f(0)=0$ and $f(z)g'(z) = \lambda f'(z), \ z \in \C,$. If $\lambda = 0$, we get $f=0$, since $g$ is not constant. In case $\lambda \neq 0$, we get $f(z) = C \exp(g(z)/\lambda)$, which implies $f=0$, since $f(0)=0$.

The operator $V_g$ is not surjective on $H(\C)$ because $V_g f(0)=0$ for each $f \in H(\C)$. Thus $0 \in \sigma(V_g,H(\C))$.
\end{proof}

\begin{lemma}\label{formresolvent}
Given $\lambda \in \C, \ \lambda \neq 0,$ and $h \in H(\C)$, the equation $f - (1/\lambda) V_g f = h$ has a unique solution given by
$$
f(z)=R_{\lambda,g} h(z) = h(0) e^{\frac{g(z)}{\lambda}} +  e^{\frac{g(z)}{\lambda}} \int_0^z e^{-\frac{g(\zeta)}{\lambda}} h'(\zeta) d\zeta, \ \ \ \ z \in \C.
$$
\end{lemma}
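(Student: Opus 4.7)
The plan is to convert the integral equation into a first-order linear ODE, solve it by the standard integrating factor method, and use Proposition \ref{elementarypt} for uniqueness.

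First I would differentiate the equation $f - (1/\lambda) V_g f = h$ with respect to $z$. Since $(V_g f)'(z) = f(z) g'(z)$, this yields the linear ODE
$$
f'(z) - \frac{g'(z)}{\lambda} f(z) = h'(z).
$$
Evaluating the original equation at $z=0$ and using $V_g f(0) = 0$ also gives the initial condition $f(0) = h(0)$. Conversely, any entire solution of this ODE with this initial condition satisfies the integral equation: integrating from $0$ to $z$ recovers $f - (1/\lambda) V_g f = h$, because $g(0) = 0$.

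Next I would solve the ODE by multiplying through by the integrating factor $e^{-g(z)/\lambda}$, noting that $(e^{-g(z)/\lambda})' = -(g'(z)/\lambda)\, e^{-g(z)/\lambda}$, so the left-hand side becomes the derivative of $e^{-g(z)/\lambda} f(z)$. Integrating from $0$ to $z$ and using $g(0)=0$ and $f(0)=h(0)$ gives
$$
e^{-g(z)/\lambda} f(z) = h(0) + \int_0^z e^{-g(\zeta)/\lambda} h'(\zeta)\, d\zeta,
$$
which, after multiplying by $e^{g(z)/\lambda}$, is exactly the claimed formula $R_{\lambda,g} h(z)$. Since $g$ and $h'$ are entire, the integrand is entire, the primitive is entire, and hence $R_{\lambda,g} h \in H(\C)$.

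For uniqueness, I would invoke Proposition \ref{elementarypt}: the injectivity of $V_g - \lambda I$ on $H(\C)$ for $\lambda \neq 0$ is equivalent to the injectivity of $I - (1/\lambda) V_g$, so the solution of $f - (1/\lambda) V_g f = h$ is unique. No step here looks truly hard; the only thing to be careful about is the bookkeeping of the initial value at $0$ and the factor $g(0)=0$ that makes the integrating factor behave nicely there, which is what produces the clean term $h(0) e^{g(z)/\lambda}$ in the formula.
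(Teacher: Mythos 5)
Your proof is correct and follows the route the paper itself indicates: reduce the integral equation to the first-order linear ODE $y' - (g'/\lambda)y = h'$ with initial condition $f(0)=h(0)$, solve by the integrating factor $e^{-g/\lambda}$ (using $g(0)=0$), and get uniqueness from the injectivity of $V_g-\lambda I$ in Proposition \ref{elementarypt}. The paper's own proof is just a terser version of the same argument (it also notes one could simply substitute the formula into the equation), so there is nothing to add.
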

\begin{proof}
This is well known; see e.g. \cite[p.\ 200]{AC} or \cite[p.\ 2]{AP}. The uniqueness follows from Proposition \ref{elementarypt}. It is enough to substitute in the equation to check the result. Alternatively, the equation implies $f(0)=h(0)$ and that $f$ is the solution of the equation $y' - (g'(z)/\lambda) y = h'(z)$. The result is also obtained solving this linear equation.
\end{proof}

\begin{proposition}\label{spectrumentire}
Let $g \in H(\C)$ be a non-constant entire function such that $g(0)=0$. The Volterra operator $V_g$ satisfies $\sigma(V_g,H(\C))= \{ 0 \}$ and $\sigma_{pt}(V_g,H(\C)) = \emptyset$.
\end{proposition}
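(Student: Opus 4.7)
The point spectrum assertion and the fact that $0 \in \sigma(V_g, H(\C))$ are already contained in Proposition \ref{elementarypt}, so the only thing left to establish is that every $\lambda \in \C \setminus \{0\}$ lies in the resolvent set $\rho(V_g, H(\C))$. Concretely, for such a $\lambda$ I need to check that $V_g - \lambda I : H(\C) \to H(\C)$ is a continuous bijection whose inverse is continuous.

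Fix $\lambda \neq 0$. Injectivity of $V_g - \lambda I$ is the content of Proposition \ref{elementarypt}. For surjectivity, given $h \in H(\C)$, I will apply Lemma \ref{formresolvent} to $\tilde h := -h/\lambda \in H(\C)$: this yields a unique $f = R_{\lambda,g}\tilde h \in H(\C)$ with $f - (1/\lambda) V_g f = \tilde h$. Multiplying this identity by $-\lambda$ gives $(V_g - \lambda I)f = h$, so $V_g - \lambda I$ is onto, and its set-theoretic inverse is the operator $h \mapsto R_{\lambda,g}(-h/\lambda)$.

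For continuity of the inverse, the cleanest route is the open mapping theorem: $H(\C)$ is a Fr\'echet space, the operator $V_g - \lambda I$ is continuous and bijective, hence its inverse is continuous automatically. Alternatively, one can read continuity off the explicit formula in Lemma \ref{formresolvent}: the evaluation $h \mapsto h(0)$, the differentiation $h \mapsto h'$, multiplication by the entire functions $e^{\pm g/\lambda}$, and the integration operator $J$ are all continuous on $H(\C)$ with the compact-open topology, and $R_{\lambda,g}$ is built from them by composition and addition.

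There is no real obstacle here; the statement amounts to packaging Proposition \ref{elementarypt} (injectivity and $0 \in \sigma$) together with Lemma \ref{formresolvent} (explicit resolvent formula), with continuity of the inverse handed to us either by the open mapping theorem or by inspection of the formula for $R_{\lambda,g}$.
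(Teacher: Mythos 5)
Your argument is correct and is essentially the paper's proof: the paper likewise combines Proposition \ref{elementarypt} with the continuity of $R_{\lambda,g}$ on $H(\C)$ for $\lambda\neq 0$, which is read off from the explicit formula exactly as you describe (evaluation at $0$, differentiation, multiplication by $e^{\pm g/\lambda}$, and integration are all continuous for the compact-open topology). Your additional remark that the open mapping theorem for Fr\'echet spaces would also yield continuity of the inverse is a valid alternative, but it is not needed once the explicit resolvent formula is in hand.
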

\begin{proof}
This is a direct consequence of Proposition \ref{elementarypt} and the continuity of the operator
$R_{\lambda,g}:H(\C) \rightarrow H(\C), \ \lambda \neq 0$.
\end{proof}

\begin{lemma}\label{subsetspectrumgeneral}
Let $X \subset H(\C)$ be a locally convex space that contains the constants and such that the inclusion $X \subset H(\C)$ is continuous. Assume that $V_g: X \rightarrow X$ is continuous for some non-constant entire function $g$ such that $g(0)=0$. Then
$$
\{ 0 \} \cup \{ \lambda \in \C \setminus \{ 0 \} \ | \ e^{\frac{g}{\lambda}} \notin X \} \subset \sigma(V_g,X).
$$
If $X$ is a Banach space, then
$$
\{ 0 \} \cup \overline{\{ \lambda \in \C \setminus \{ 0 \} \ | \ e^{\frac{g}{\lambda}} \notin X \}} \subset \sigma(V_g,X).
$$
\end{lemma}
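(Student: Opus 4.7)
The plan is to leverage Lemma \ref{formresolvent}: whenever $\lambda \neq 0$ lies in the resolvent of $V_g$ on $X$, the inverse of $I - (1/\lambda)V_g$ on $X$ must agree, by uniqueness inside $H(\C)$, with the operator $R_{\lambda,g}$ produced in the lemma. Evaluating this inverse at the constant function $1 \in X$ should then force $e^{g/\lambda}$ to belong to $X$.

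First I would dispatch the easy inclusion $0 \in \sigma(V_g,X)$: since $V_gf(0)=0$ for every $f \in H(\C)$, no nonzero constant belongs to the range of $V_g$, and because $X$ contains the constants this shows that $V_g:X \to X$ fails to be surjective.

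Next, for $\lambda \in \C \setminus \{0\}$ with $e^{g/\lambda} \notin X$, I would argue by contradiction. Suppose $\lambda \in \rho(V_g,X)$, so that $V_g - \lambda I:X \to X$ (equivalently $I - (1/\lambda)V_g$) is bijective. Pick $f \in X$ solving
$$
f - \tfrac{1}{\lambda} V_g f = 1.
$$
Since the inclusion $X \hookrightarrow H(\C)$ is continuous and $V_g$ on $X$ is the restriction of $V_g$ on $H(\C)$, the same identity holds in $H(\C)$. By Lemma \ref{formresolvent} there is a unique solution in $H(\C)$, namely $R_{\lambda,g}(1)(z) = e^{g(z)/\lambda}$, because $h(0)=1$ and $h' \equiv 0$. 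Hence $f = e^{g/\lambda} \in X$, contradicting the hypothesis on $\lambda$. This gives the first displayed inclusion.

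For the Banach case, I would simply invoke the classical fact that the resolvent set of a bounded operator on a Banach space is open in $\C$, so the spectrum $\sigma(V_g,X)$ is closed and therefore contains the closure of the set already shown to lie in it. I do not anticipate a genuine obstacle here: Lemma \ref{formresolvent} does the heavy lifting, and the only point requiring care is transferring the equation $f - (1/\lambda)V_g f = 1$ from $X$ to $H(\C)$ via the continuous inclusion in order to apply uniqueness.
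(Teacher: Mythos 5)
Your proposal is correct and follows essentially the same route as the paper: non-surjectivity of $V_g$ (since its range omits the nonzero constants) gives $0\in\sigma(V_g,X)$, evaluating the resolvent at the constant function $1$ via Lemma \ref{formresolvent} forces $e^{g/\lambda}\in X$ whenever $\lambda\in\rho(V_g,X)$, and closedness of the spectrum of a bounded operator on a Banach space yields the second inclusion. The only cosmetic difference is that the paper phrases the middle step as continuity of $R_{\lambda,g}$ on $X$, while you use bijectivity plus uniqueness in $H(\C)$; both are fine.
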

\begin{proof}
Since $X$ contains the constants, $V_g$ is not surjective and $0 \in \sigma(V_g,X)$. If $\lambda \notin \sigma(V_g,X)$, then $\lambda \neq 0$ and the operator $R_{\lambda,g}: X \rightarrow X$
defined in Lemma \ref{formresolvent} is continuous. In particular, $R_{\lambda,g}(1) = e^{\frac{g}{\lambda}} \in X$. This implies the desired inclusion. Recall that in case $X$ is a Banach space, $\sigma(V_g,X)$ is compact.
\end{proof}

\begin{lemma}\label{charactresolvent}
Let $X \subset H(\C)$ be a locally convex space that contains the constants and such that the inclusion $X \rightarrow H(\C)$ is continuous. Assume that $V_g: X \rightarrow X$ is continuous for some non-constant entire function $g$ such that $g(0)=0$. The following conditions are equivalent:
\begin{itemize}

\item[(i)] $\lambda \in \rho(V_g,X)$.

\item[(ii)] $R_{\lambda,g}: X \rightarrow X$ is continuous.

\item[(iii)] (a) $e^{\frac{g}{\lambda}} \in X$, and

(b) $S_{\lambda,g}: X_0 \rightarrow X_0, \ S_{\lambda,g}h(z):= e^{\frac{g(z)}{\lambda}} \int_0^z h'(\zeta) e^{-\frac{g(\zeta)}{\lambda}} d\zeta, \  z \in \C,$ is continuous on the subspace $X_0$ of $X$ of all the functions $h \in X$ with $h(0)=0$.

\end{itemize}
\end{lemma}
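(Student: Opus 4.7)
The plan is to prove (i) $\Leftrightarrow$ (ii) using the formula from Lemma \ref{formresolvent}, and then prove (ii) $\Leftrightarrow$ (iii) by decomposing $R_{\lambda,g}$ into two pieces according to the two summands in its explicit formula.

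\textbf{Step 1: (i) $\Leftrightarrow$ (ii).} Note $V_g-\lambda I = -\lambda(I-(1/\lambda)V_g)$, so a continuous inverse of $V_g-\lambda I$ on $X$ is the same (up to the scalar $-1/\lambda$) as a continuous inverse of $I-(1/\lambda)V_g$ on $X$. By Lemma \ref{formresolvent} the unique solution in $H(\C)$ of $f-(1/\lambda)V_g f=h$ is $f=R_{\lambda,g}h$. Hence if (ii) holds, $R_{\lambda,g}$ witnesses bijectivity and continuity of the inverse of $I-(1/\lambda)V_g$ on $X$, giving (i). Conversely, if (i) holds and $U=(V_g-\lambda I)^{-1}\colon X\to X$ is continuous, then for any $h\in X$ the function $f=-\lambda Uh\in X$ satisfies $f-(1/\lambda)V_gf=h$, and by the uniqueness part of Lemma \ref{formresolvent} we conclude $f=R_{\lambda,g}h$; thus $R_{\lambda,g}=-\lambda U$ is continuous on $X$.

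\textbf{Step 2: Key decomposition.} For $h\in X$, split $h=h(0)\cdot 1+(h-h(0))$, noting $h-h(0)\in X_0$. Since $(h-h(0))'=h'$, the explicit formula of Lemma \ref{formresolvent} gives
$$
R_{\lambda,g}h=h(0)\,e^{g/\lambda}+S_{\lambda,g}(h-h(0)).
$$
Also note that the point evaluation $\delta_0\colon h\mapsto h(0)$ is continuous from $X$ into $\C$: it is continuous on $H(\C)$ and $X\hookrightarrow H(\C)$ is continuous by assumption. Since $X$ contains the constants, $h\mapsto h-h(0)\cdot 1$ is therefore a continuous projection of $X$ onto $X_0$.

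\textbf{Step 3: (ii) $\Rightarrow$ (iii).} Applying $R_{\lambda,g}$ to the constant function $1\in X$ yields $R_{\lambda,g}(1)=e^{g/\lambda}$, so (a) holds. For (b), $S_{\lambda,g}$ coincides with $R_{\lambda,g}$ on $X_0$ (by the decomposition above with $h(0)=0$), and $S_{\lambda,g}h$ vanishes at $0$, so $S_{\lambda,g}$ maps $X_0$ into $X_0$ continuously.

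\textbf{Step 4: (iii) $\Rightarrow$ (ii).} Using the decomposition, the map $h\mapsto h(0)\,e^{g/\lambda}$ is continuous from $X$ to $X$ as the composition of the continuous scalar functional $\delta_0$ with scalar multiplication by the fixed element $e^{g/\lambda}\in X$ from (a). The map $h\mapsto S_{\lambda,g}(h-h(0))$ is continuous as a composition of the continuous projection $X\to X_0$ from Step 2 with the continuous $S_{\lambda,g}\colon X_0\to X_0$ from (b), followed by the continuous inclusion $X_0\hookrightarrow X$. Adding, $R_{\lambda,g}\colon X\to X$ is continuous.

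The only conceptual subtlety is recognizing that the \lqq$h(0)$-part\rqq\ of $R_{\lambda,g}$ and the \lqq$S_{\lambda,g}$-part\rqq\ must be handled separately (this is exactly why (iii) splits into (a) and (b)); nothing else is really an obstacle, since continuity of $\delta_0$ on $X$ is automatic from the standing hypothesis that $X\hookrightarrow H(\C)$ continuously.
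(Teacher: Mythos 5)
Your proof is correct and is precisely the argument the paper has in mind: the paper dismisses this lemma with \lqq follows directly from the definitions\rqq, and your four steps (the resolvent identity from Lemma \ref{formresolvent}, the decomposition $R_{\lambda,g}h=h(0)e^{g/\lambda}+S_{\lambda,g}(h-h(0))$, and the continuity of $\delta_0$ inherited from the embedding into $H(\C)$) supply exactly the omitted details. No gaps.
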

\begin{proof}
This follows directly from the definitions.
\end{proof}

\begin{lemma}\label{integration by parts}
Let $X \subset H(\C)$ be a locally convex space that contains the constants and such that the inclusion $X \rightarrow H(\C)$ is continuous. Let $X_0$ be the subspace of $X$ of all the functions $h \in X$ with $h(0)=0$. The following conditions are equivalent for $\lambda \in \C\setminus \{ 0 \}$.

\begin{itemize}

\item[(i)] $S_{\lambda,g}: X_0 \rightarrow X_0, \ S_{\lambda,g}h(z):= e^{\frac{g(z)}{\lambda}} \int_0^z h'(\zeta) e^{-\frac{g(\zeta)}{\lambda}} d\zeta, \  z \in \C,$ is continuous.

\item[(ii)] $T: X_0 \rightarrow X_0, Th(z):= e^{\frac{g(z)}{\lambda}} \int_0^z h(\zeta) g'(\zeta) e^{-\frac{g(\zeta)}{\lambda}} d\zeta, \  z \in \C,$ is continuous.

\end{itemize}
\end{lemma}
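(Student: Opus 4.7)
The proof plan is to apply integration by parts inside the integral defining $S_{\lambda,g}h$, using that $h(0)=0$ for $h\in X_0$, and then read off a direct algebraic relation between $S_{\lambda,g}$ and $T$ on $X_0$.

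First I would compute, for $h \in X_0$,
$$
\int_0^z h'(\zeta) e^{-\frac{g(\zeta)}{\lambda}} d\zeta = \left[ h(\zeta) e^{-\frac{g(\zeta)}{\lambda}} \right]_0^z + \frac{1}{\lambda}\int_0^z h(\zeta) g'(\zeta) e^{-\frac{g(\zeta)}{\lambda}} d\zeta,
$$
where the boundary term at $0$ vanishes because $h \in X_0$ means $h(0)=0$. Multiplying by $e^{g(z)/\lambda}$ then gives the identity
$$
S_{\lambda,g} h(z) = h(z) + \frac{1}{\lambda} T h(z), \qquad z \in \C,
$$
valid for every $h \in X_0$. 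This identity is the entire content of the lemma.

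From here the equivalence is immediate. The identity $T = \lambda(S_{\lambda,g} - I)$ on $X_0$ shows that if $S_{\lambda,g}$ is continuous on $X_0$, so is $T$; conversely, $S_{\lambda,g} = I + (1/\lambda)T$ gives that continuity of $T$ implies continuity of $S_{\lambda,g}$.

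The only thing one needs to check carefully, and the sole potential obstacle, is that the integration by parts is legitimate and stays inside $X_0$: both $h'e^{-g/\lambda}$ and $hg'e^{-g/\lambda}$ are entire, so the line integrals along any path from $0$ to $z$ agree with the path-independent primitives, and both $S_{\lambda,g}h$ and $Th$ visibly satisfy $(S_{\lambda,g}h)(0) = 0 = (Th)(0)$, so the operators genuinely map into $X_0$. With these routine verifications in place, the equivalence follows.
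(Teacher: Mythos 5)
Your proof is correct and is essentially the paper's own argument: the paper likewise reduces the lemma to the identity $S_{\lambda,g}h = h + (1/\lambda)Th$ on $X_0$, obtained by the same integration by parts using $h(0)=0$.
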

\begin{proof}
This is a direct consequence of the identity
$$
e^{\frac{g(z)}{\lambda}} \int_0^z h'(\zeta) e^{-\frac{g(\zeta)}{\lambda}} d\zeta = h(z) + (1/\lambda)
e^{\frac{g(z)}{\lambda}} \int_0^z h(\zeta) g'(\zeta) e^{-\frac{g(\zeta)}{\lambda}} d\zeta,
$$
valid for $h \in H(\C), h(0)=0,$ that can be seen integrating by parts.
\end{proof}

\section{Spectra of Volterra operators on $H^\infty_v(\C)$} \label{spectraBanach}

We concentrate our attention in the Volterra operator acting on the Banach space $H^\infty_v(\C)$, with $v(r) = \exp(-\alpha r^p)$, where $\alpha, p >0$. According to \cite[Corollaries 3.12 and 3.13]{BoTask}, we have the following result.

\begin{proposition} \label{VolterraBanachcharact}
Assume that $v(r)=\exp(-\alpha r^p)$, $\alpha>0, \ p > 0$.

\begin{itemize}
\item[(i)] $V_g : H^{\infty}_v(\C) \rightarrow H^{\infty}_{v}(\C)$ is continuous if and only if $g$ is a polynomial of degree less than or equal to the integer part of $p$.

\item[(ii)] $V_g : H^{\infty}_v(\C) \rightarrow H^{\infty}_v (\C)$ is compact if and only if $g$ is a polynomial of degree less than or equal to the integer part of $p-1$.
\end{itemize}
\end{proposition}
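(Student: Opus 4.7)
I would first note that this is imported directly from \cite[Corollaries 3.12 and 3.13]{BoTask}, so the cleanest route in the present paper is to cite those results; below is a sketch of how I would carry out an independent proof.

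For \emph{sufficiency}, my plan is to estimate $|V_g f(z)|$ by integrating along the segment $[0,z]$. Writing $\deg g = n$, the bounds $|g'(\zeta)|\le C(1+|\zeta|^{n-1})$ and $|f(\zeta)|\le \|f\|_v e^{\alpha|\zeta|^p}$ give
\[
|V_g f(z)| \leq C\|f\|_v \int_0^{|z|} (1+t^{n-1}) e^{\alpha t^p}\,dt.
\]
The central ingredient is the Laplace-type asymptotic $\int_0^R t^{n-1} e^{\alpha t^p}\,dt \sim (\alpha p)^{-1} R^{n-p} e^{\alpha R^p}$, which I would obtain via the substitution $u=\alpha t^p$ and the asymptotics of the incomplete gamma function. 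This yields $v(z)|V_g f(z)| = O(|z|^{n-p})$, bounded when $n\le [p]$ (giving continuity) and decaying to $0$ when $n-p<0$. Compactness in the latter case then follows by combining uniform decay at infinity with the fact that norm-bounded sets in $H^\infty_v$ are normal families in $H(\C)$, via a standard Arzel\`a--Ascoli argument on weighted sup-norm spaces.

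For \emph{necessity}, my plan rests on the algebraic identity $V_g^k(1)=g^k/k!$, proved by induction from $V_g(1)=g$ and $V_g(g^k/k!)=g^{k+1}/(k+1)!$. If $V_g$ is continuous, $\|g^k\|_v \leq k!\,\|V_g\|^k\|1\|_v$, so $|g(z)|^k \leq C k!\,\|V_g\|^k e^{\alpha|z|^p}$. Taking $k$-th roots, applying Stirling $(k!)^{1/k} \sim k/e$, and optimizing over $k$ with the choice $k\approx \alpha|z|^p$ yields $|g(z)|=O(|z|^p)$, and since $g$ is entire, $\deg g \leq [p]$. For the compactness criterion, I would additionally invoke Proposition \ref{elementarypt} and the Riesz--Schauder theorem to force $\sigma(V_g,H^\infty_v)=\{0\}$, whence $\|V_g^k\|^{1/k}\to 0$; feeding this sharper decay into the same optimization upgrades the estimate to $|g(z)|=o(|z|^p)$, and a Cauchy-coefficient argument concludes $\deg g\le [p-1]$.

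The main obstacle I expect is calibrating the threshold integers $[p]$ and $[p-1]$ sharply. The optimization gives $[p]$ naturally in the continuity case, but matching $[p-1]$ exactly in the compactness case is delicate: the crude sufficiency estimate above yields only $\deg g<p$, so to pin down the integer part $[p-1]$ precisely (particularly when $p$ is not an integer) one must refine the argument, presumably by testing against peak-type functions in $H^\infty_v$ or by extracting a sharper lower bound on $\sup_{\|f\|_v\le 1} v(z)|V_g f(z)|$ than the one obtained from $V_g^k(1)$ alone; this is where the technical work in \cite{BoTask} is concentrated.
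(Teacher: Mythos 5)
You are right that the paper offers no proof of Proposition \ref{VolterraBanachcharact} at all --- it is imported verbatim from \cite[Corollaries 3.12 and 3.13]{BoTask} --- so citing those corollaries is exactly what the paper does, and any argument you supply is necessarily independent. The ingredients of your sketch are the correct ones: the radial estimate with the Laplace asymptotic $\int_0^R t^{n-1}e^{\alpha t^p}\,dt\sim(\alpha p)^{-1}R^{n-p}e^{\alpha R^p}$ does give $v(z)|V_gf(z)|=O(|z|^{n-p})\|f\|_v$; the ``bounded plus uniform decay at infinity implies relatively compact'' criterion in $H^\infty_v(\C)$ is valid; and the identity $V_g^k(1)=g^k/k!$, combined with $(k!)^{1/k}\sim k/e$ and the choice $k\approx\alpha|z|^p$, cleanly yields $M(g,r)=O(r^p)$ from boundedness (hence $\deg g\le\lfloor p\rfloor$ by Cauchy estimates) and $M(g,r)=o(r^p)$ from spectral radius zero (hence $\deg g<p$).

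The genuine problem is your final paragraph, which misdiagnoses where the tension lies. Your sufficiency estimate proves that $V_g$ is compact for \emph{every} polynomial $g$ of degree $n<p$: the bound $\sup_{\|f\|_v\le1}\sup_{|z|\ge R}v(z)|V_gf(z)|\le CR^{n-p}\to0$ is uniform over the unit ball, so compactness follows. Consequently no refinement of the necessity argument --- no peak functions, no sharper lower bound on $\|V_g^k\|^{1/k}$ --- can push the conclusion from $\deg g<p$ down to $\deg g\le\lfloor p-1\rfloor$ when $p\notin\N$, because for $\lfloor p-1\rfloor<n<p$ your own upper bound already certifies compactness. As written, the proposal simultaneously proves compactness in that range and proposes to establish non-compactness there; you cannot have both. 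What your two halves actually establish is the clean dichotomy ``compact if and only if $\deg g<p$'', which agrees with ``$\deg g\le\lfloor p-1\rfloor$'' exactly when $p\in\N$; for non-integer $p$ the boundedness condition $\deg g\le\lfloor p\rfloor$ already coincides with $\deg g<p$, so every bounded $V_g$ is compact. (Note that the present paper only ever uses the ``if'' direction of part (ii), in the proof of Theorem \ref{mainBanachspectrum}(i), so this discrepancy does not propagate.) You need to resolve this explicitly --- either record the characterization as $\deg g<p$ and observe how the integer-part formulation relates to it, or exhibit a concrete error in the decay estimate. Deferring the contradiction to ``technical work in \cite{BoTask}'' leaves the proof of (ii) incomplete.
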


\begin{lemma}\label{integraltechnical}
Let $v$ be a weight such that $v(r)e^{\alpha r^n}$ is non-increasing on $[r_0,\infty[$ for some $r_0>0, \ \alpha > 0$ and $n \in \N$. The operator $T: H^{\infty}_v(\C) \rightarrow H^{\infty}_{v}(\C)$ defined by
$$
T_{\gamma} h(z):= e^{\gamma z^n} \int_0^z \zeta ^{n-1} h(\zeta) e^{-\gamma \zeta^n} d\zeta, \ \ \ z \in \C,
$$
is continuous if $|\gamma| < \alpha$.
\end{lemma}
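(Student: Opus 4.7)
The plan is to prove the norm estimate $\norm{T_\gamma h}_v \le C\norm{h}_v$ by parametrising the integral along the straight-line segment $\zeta = tz$, $t \in [0,1]$, from $0$ to $z$. This rewrites
$$T_\gamma h(z) = e^{\gamma z^n}\, z^n \int_0^1 t^{n-1}\, h(tz)\, e^{-\gamma t^n z^n}\, dt.$$
Applying the pointwise bound $\abs{h(\zeta)} \le \norm{h}_v/v(\abs{\zeta})$ and setting $\beta := \operatorname{Re}(\gamma z^n)$, $r = \abs{z}$ (so $\abs{\beta} \le \abs{\gamma} r^n$), the problem reduces to bounding, uniformly in $z$,
$$v(r)\,\abs{T_\gamma h(z)} \le \norm{h}_v\, r^n \int_0^1 t^{n-1}\, \frac{v(r)}{v(tr)}\, e^{\beta(1-t^n)}\, dt.$$

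For $r \ge r_0$ I would split the integral at $t = r_0/r$. On $[r_0/r, 1]$ both $r$ and $tr$ lie in the monotonicity range $[r_0,\infty[$, and the hypothesis yields the key ratio bound $v(r)/v(tr) \le e^{-\alpha r^n(1-t^n)}$. The substitution $s = t^n$ then turns the integrand into $e^{(\beta - \alpha r^n)(1-s)}$; since $\abs{\gamma}<\alpha$ gives $\beta - \alpha r^n \le -(\alpha-\abs{\gamma}) r^n < 0$, the resulting $s$-integral is at most $1/(\alpha r^n - \beta) \le 1/((\alpha - \abs{\gamma}) r^n)$. Multiplying by the $r^n/n$ prefactor produces the uniform bound $\norm{h}_v / (n(\alpha - \abs{\gamma}))$; this is the step where the strict inequality $\abs{\gamma}<\alpha$ is essential.

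The harder sub-case is $t \in [0, r_0/r]$, where $tr$ falls outside the monotonicity range, so $v(r)/v(tr)$ must be bounded separately. I would use $v(tr) \ge c_0 := \min_{[0,r_0]} v > 0$ together with the estimate $v(r) \le v(r_0) e^{\alpha r_0^n} e^{-\alpha r^n}$ obtained by applying the monotonicity to $r$ versus $r_0$. The potentially explosive factor $e^{\beta(1-t^n)} \le e^\beta$ is then absorbed via $v(r)e^\beta \le C_v\, e^{(\abs{\gamma}-\alpha)r^n}\le C_v$, while the remaining integral $\int_0^{r_0/r} t^{n-1}\,dt = (r_0/r)^n/n$ cancels the $r^n$ prefactor. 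The case $r \le r_0$ is routine: $v$ and $1/v$ are both bounded on $[0,r_0]$, so standard local estimates on holomorphic functions control everything by a constant times $\norm{h}_v$. I expect this small-$t$ regime (equivalently, $\zeta$ near the origin) to be the main obstacle: the naive straight-line estimate produces an apparently fatal $e^{\abs{\gamma} r^n}$ factor, and only the combination of the $v$-decay extracted from the monotonicity hypothesis and the strict inequality $\abs{\gamma}<\alpha$ tames it.
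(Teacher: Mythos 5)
Your proof is correct and follows essentially the same route as the paper: parametrise the integral along $\zeta = tz$, use the monotonicity of $v(r)e^{\alpha r^n}$ to bound $v(r)/v(tr)$ by $e^{-\alpha r^n(1-t^n)}$, and exploit $|\gamma|<\alpha$ to make the resulting $t$-integral uniformly bounded. The only difference is cosmetic: the paper avoids your case-splitting at $t=r_0/r$ (and the case $r\le r_0$) by first modifying $v$ on a compact interval --- which yields an equivalent norm, as noted in the preliminaries --- so that $v(r)e^{\alpha r^n}$ is non-increasing on all of $[0,\infty[$, whereas you handle the region near the origin by hand with the bounds $\min_{[0,r_0]}v>0$ and $v(r)\le v(r_0)e^{\alpha r_0^n}e^{-\alpha r^n}$.
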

\begin{proof}
Changing the value of $v(r)$ on a compact interval, we may assume that $v(r)e^{\alpha r^n}$ is non-increasing on $[0,\infty[$. Fix $z \in \C, z \neq 0$. For each $0 \leq t \leq 1$ we have
$$
v(|z|) \leq v(t|z|) e^{\alpha|z|^n(t^n-1)}.
$$
Therefore we can estimate
$$
v(|z|) |T_{\gamma} h(z)| = v(|z|) \left| \int_0^1 z^n t^{n-1} h(tz) e^{\gamma z^n(1 - t^n)} dt \right| \leq
$$
$$
\leq |z|^n \int_0^1 t^{n-1} |h(tz)|v(tz) e^{|\gamma| |z|^n(1-t^n)} e^{\alpha |z|^n(t^n-1)} dt \leq
$$
$$
\leq ||h||_v |z|^n \int_0^1 t^{n-1} e^{(\alpha-|\gamma|) |z|^n(t^n-1)} dt =
$$
$$
(1/n) ||h||_v (\alpha-|\gamma|)^{-1} \big(1 - e^{-(\alpha-|\gamma|)|z|^n} \big) \leq (1/n) (\alpha-|\gamma|)^{-1} ||h||_v,
$$
since $\alpha-|\gamma|>0$.
\end{proof}

\begin{theorem}\label{mainBanachspectrum}
Assume that $v(r)=\exp(-\alpha r^p)$, $\alpha>0, \ p > 0$. Let $g$ be a polynomial of degree $n$ less than or equal to the integer part of $p$ with $g(0)=0$.

\begin{itemize}
\item[(i)] If the degree $n$ of $g$ satisfies $n < p$, then $\sigma(V_g,H^{\infty}_{v}(\C)) = \{ 0 \}$.

\item[(ii)] If $p=n \in \N$ and $g(z)=\beta z^n + k(z)$, $k$ a polynomial of degree strictly less than $n$, then $\sigma(V_g,H^{\infty}_{v}(\C)) = \{ \lambda \in \C \ | \ |\lambda| \leq |\beta|/\alpha \}$.
\end{itemize}

Moreover, we have $\sigma(V_g,H^{\infty}_{v}(\C)) = \{ 0 \} \cup \overline{\{ \lambda \in \C \setminus \{ 0 \} \ | \ e^{\frac{g}{\lambda}} \notin H^{\infty}_{v}(\C) \}} $.
\end{theorem}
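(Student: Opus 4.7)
The plan is to establish the general identity at the end of the statement, from which (i) and (ii) will follow by describing the exceptional set $E := \{\lambda \neq 0 : e^{g/\lambda} \notin H^\infty_v(\C)\}$. Lemma \ref{subsetspectrumgeneral} already supplies $\{0\} \cup \overline{E} \subset \sigma(V_g,H^\infty_v(\C))$, so the work reduces to identifying $\overline{E}$ explicitly and proving the reverse inclusion.

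First I would compute $E$. Writing $g(z) = \beta z^n + k(z)$ with $\deg k < n$, the maximum of $\mathrm{Re}(g(z)/\lambda)$ on $|z|=r$ is $(|\beta|/|\lambda|)r^n + O(r^{n-1})$. In case (i) the term $-\alpha r^p$ dominates since $p > n$, hence $e^{g/\lambda} \in H^\infty_v(\C)$ for every $\lambda \neq 0$ and $E = \emptyset$. In case (ii) one obtains $\{0 < |\lambda| < |\beta|/\alpha\} \subset E$, while $|\lambda| > |\beta|/\alpha$ forces $e^{g/\lambda} \in H^\infty_v(\C)$, so $\{0\} \cup \overline{E} = \{\lambda : |\lambda| \leq |\beta|/\alpha\}$.

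For the upper bound I would first dispose of the monomial $g_0(z) = \beta z^n$. The operator $T$ of Lemma \ref{integration by parts} associated with $g_0$ is then $n\beta\, T_\gamma$ with $\gamma = \beta/\lambda$, where $T_\gamma$ is the operator of Lemma \ref{integraltechnical}. In case (i), $v(r) e^{\alpha' r^n} = e^{-\alpha r^p + \alpha' r^n}$ is eventually non-increasing for every $\alpha' > 0$ (because $p > n$), so Lemma \ref{integraltechnical} yields continuity of $T_\gamma$ for every $\gamma$, whence $\sigma(V_{g_0},H^\infty_v(\C)) = \{0\}$ via Lemma \ref{charactresolvent}. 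In case (ii), $v(r) e^{\alpha r^n} \equiv 1$, so Lemma \ref{integraltechnical} gives continuity of $T_\gamma$ precisely when $|\gamma| < \alpha$, and together with the lower bound this yields $\sigma(V_{g_0},H^\infty_v(\C)) = \{\lambda : |\lambda| \leq |\beta|/\alpha\}$.

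Finally I would pass from $g_0$ to $g$ by compact perturbation. Since $\deg k \leq n - 1 \leq \lfloor p \rfloor - 1 = \lfloor p - 1 \rfloor$, Proposition \ref{VolterraBanachcharact}(ii) gives that $V_k = V_g - V_{g_0}$ is compact on $H^\infty_v(\C)$. Both $V_g$ and $V_{g_0}$ have empty point spectrum on $H^\infty_v(\C)$ by Proposition \ref{elementarypt}. The invariance of the Wolf essential spectrum under compact perturbations, combined with constancy of the Fredholm index on components of the Fredholm domain and the absence of eigenvalues, forces $\sigma(V_g,H^\infty_v(\C)) = \sigma(V_{g_0},H^\infty_v(\C))$. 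The main obstacle I anticipate is justifying this last step cleanly, in particular verifying that no component of $\sigma \setminus \sigma_{\mathrm{ess}}$ arises from Fredholm-index considerations beyond eigenvalues. A fallback is a direct refinement of Lemma \ref{integraltechnical}: the extra factor from $k(z) - k(tz)$ contributes $O((1-t) r^m)$ with $m < n$ in the exponent, which is dominated by the leading $-(\alpha - |\gamma|) r^n (1-t^n) \leq -(\alpha-|\gamma|) r^n (1-t)$ once $r$ is sufficiently large, so the continuity of $T$ can be verified directly without invoking Weyl's theorem.
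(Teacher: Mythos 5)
Your proposal is correct and follows essentially the same route as the paper: the lower bound comes from Lemma \ref{subsetspectrumgeneral}, the upper bound for the monomial $\beta z^n$ from Lemmas \ref{integraltechnical}, \ref{integration by parts} and \ref{charactresolvent}, and the general case is reduced to the monomial by the compact perturbation $V_k$. The Fredholm step you worry about closes cleanly without any discussion of components: if $V_{g_0}-\lambda I$ is invertible then $V_g-\lambda I$ is a compact perturbation of an invertible operator, hence Fredholm of index $0$, and its injectivity (Proposition \ref{elementarypt}) forces bijectivity — the paper cites a Fredholm-alternative result from Jameson for exactly this.
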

\begin{proof}
(i) If $n$ is less than or equal to the integer part of $p-1$, then $V_g: H^{\infty}_v(\C) \rightarrow H^{\infty}_v (\C)$ is compact. This implies that $\sigma(V_g,H^{\infty}_{v}(\C)) = \{ 0 \}$, since $V_g$ has no eigenvalues by Proposition \ref{elementarypt}.

Assume now that $p-1 < n< p$. For each $\lambda \neq 0$, $e^{\frac{g}{\lambda}} \in H^{\infty}_v(\C)$ as it is easy to check.

Suppose first that $g(z)=\beta z^n$ for some $\beta \neq 0$. For $\lambda \neq 0$, take $\alpha > |\beta|/|\lambda|$. Clearly $v(r)e^{\alpha r^n}$ is non-increasing on $[r_0,\infty[$ for some $r_0>0$. We can apply Lemma \ref{integraltechnical} to conclude that
$$
T_{\beta/\lambda} h(z):= e^{\frac{\beta z^n}{\lambda}} \int_0^z \zeta ^{n-1} h(\zeta) e^{-\frac{\beta \zeta^n}{\lambda}} d\zeta, \ \ \ z \in \C,
$$
is continuous on the subspace of $H^{\infty}_{v}(\C)$ of the functions vanishing at $0$. By Lemma \ref{integration by parts}, the operator
$$S_{\lambda,g}h(z):= e^{\frac{g(z)}{\lambda}} \int_0^z h'(\zeta) e^{-\frac{g(\zeta)}{\lambda}} d\zeta, \  z \in \C,$$ is continuous in the same Banach space.
Finally, Lemma \ref{charactresolvent} implies $\lambda \in \rho(V_g,H^{\infty}_v(\C))$. This completes the proof in this case.

Suppose now that $g(z)=\beta z^n + k(z)$ for some $\beta \neq 0$ and some polynomial $k$ of degree strictly less than $n$. Setting $g_1(z):= \beta z^n$, we have $V_g=V_{g_1} + V_k$, and $V_k$ is a compact injective operator on $H^{\infty}_v(\C)$. If $\lambda \neq 0$, we have $V_g - \lambda I= (V_{g_1} - \lambda I) + V_k$, and $V_g - \lambda I$ is bijective if and only if $V_{g_1} - \lambda I$ is bijective. This is a consequence e.g.\  of \cite[Corollary 34.14]{Jameson} keeping in mind that both $V_g - \lambda I$ and $V_{g_1} - \lambda I$ are injective by Proposition \ref{elementarypt}. Therefore, we conclude $\sigma(V_g,H^{\infty}_{v}(\C)) = \sigma(V_{g_1},H^{\infty}_{v}(\C)) = \{ 0 \}$.

(ii) We suppose now that $v(r)=\exp(-\alpha r^n)$, $\alpha>0,$ and that $g$ is a polynomial of degree exactly $n$.

Again we consider first the case $g(z)=\beta z^n$. For $\lambda \in\C\setminus \{ 0 \}$, we have $e^{\frac{g}{\lambda}} \in H^{\infty}_v(\C)$ if and only if $|\beta|/|\lambda| \leq \alpha$. Therefore, we can apply Lemma \ref{subsetspectrumgeneral} to conclude that $\{ \lambda \ | \ |\lambda| \leq |\beta|/ \alpha \} \subset \sigma(V_g,H^{\infty}_{v}(\C))$. Now take $\lambda \in \C$ with $|\lambda| > |\beta|/\alpha$. Since $v(r)\exp(\alpha r^n)=1$, we apply Lemma \ref{integraltechnical} and Lemma \ref{integration by parts} to get that the operator
$$S_{\lambda,g}h(z):= e^{\frac{g(z)}{\lambda}} \int_0^z h'(\zeta) e^{-\frac{g(\zeta)}{\lambda}} d\zeta, \  z \in \C,$$ is continuous on the subspace of $H^{\infty}_{v}(\C)$ of the functions vanishing at $0$. Consequently $\lambda \in \rho(V_g,H^{\infty}_v(\C))$ by Lemma \ref{charactresolvent}, which yields
$\sigma(V_g,H^{\infty}_{v}(\C)) = \{ \lambda \in \C \ | \ |\lambda| \leq |\beta|/\alpha \}$ in the present case.

In the general case $g(z)=\beta z^n + k(z), \beta \neq 0$ and some polynomial $k$ of degree strictly less than $n$, we proceed as in the proof of part (i) to conclude $\sigma(V_g,H^{\infty}_{v}(\C)) = \sigma(V_{g_1},H^{\infty}_{v}(\C)), \ g_1(z)=\beta z^n$.

\end{proof}

Given a weight $v$, one defines the following closed subspace of $H^{\infty}_{v}(\C))$:

$$H_v^0(\C) := \{ f \in H(\C) \ | \ \lim_{|z| \rightarrow \infty} v(|z|)|f(z)|=0 \}.$$

The polynomials are contained and dense in $H_v^0(\C)$. By \cite[Ex 2.2]{bisum}, the bidual of $H^0_v(\C)$ is isometrically isomorphic to $H^\infty_v(\C).$
By \cite[Corollaries 3.8 and 3.12]{BoTask} $V_g$ is bounded on $H_v^0(\C)$ if and only if it is bounded on $H^{\infty}_{v}(\C)$ when $v(r)=\exp(-\alpha r^p), \alpha, p >0$. Now, proceeding as in the proof of \cite[Lemma 1]{BCHMP} or \cite[Lemma 2.1]{BeBoF} one can show, for $v(r)=\exp(-\alpha r^p)$, that if $V_g$ is bounded on $H^{\infty}_{v}(\C)$, then it coincides with the bi-transpose $(V_g)''$ of $V_g: H_v^0(\C) \rightarrow H_v^0(\C)$. Accordingly, $\sigma(V_g,H_v^0(\C))=\sigma(V_g,H^\infty_v(\C))$.

\section{Spectra of Volterra operators on H\"ormander algebras} \label{spectraalgebras}

A function $p: \mathbb{C} \rightarrow [0, \infty [$ is called a \textit{growth condition} if it  is continuous, subharmonic,  radial, increases with $|z|$  and satisfies:

\begin{itemize}
\item [$(\alpha)$] $\log(1+|z|^2)=o(p(|z|))$ as $|z|\to \infty$,
\item [$(\beta)$] $p(2|z|)=O(p(|z|))$ as $|z|\to \infty.$
\end{itemize}

Given a growth condition $p$, we define the following weighted spaces of entire functions (see e.g.\  \cite{Berenstein_Gay_1995_complex} and \cite{Berenstein_Taylor_1979_a}):
$$A_p:=\{f\in H(\mathbb{C})| \ {\rm there \ is} \ A>0: \sup_{z\in \mathbb{C}}|f(z)| \exp(-Ap(z))<\infty \},$$
endowed with the inductive limit topology, for which it is a (DFN)-algebra (cf.\ \cite{Meise_1985_sequence}). We also define

$$A^0_p:=\{f\in H(\mathbb{C})| \ {\rm for \ all} \ \varepsilon>0: \sup_{z\in \mathbb{C}}|f(z)| \exp(-\varepsilon p(z))<\infty \},$$
endowed with the projective limit topology, for which it is a nuclear Fr\'echet algebra (cf. \cite{Meise_Taylor_1987_sequence}).

Clearly $A^0_p \subset A_p$. Condition $(\alpha)$ implies that the polynomials are dense in $A_p$ and in $A^0_p.$  Condition $(\beta)$ implies that both spaces are stable under differentiation. By the closed graph theorem, the differentiation operator $D$ is continuous on $A_p$ and on $A^0_p.$ It was observed in \cite[Lemma 4.1]{BeBoF2} that the integration operator $J$ is continuous on both spaces.

Weighted algebras of entire functions of this type, usually known as \textit{H\"ormander algebras}, have been considered since the work of Berenstein and Taylor \cite{Berenstein_Taylor_1979_a} by many authors; see e.g.\ \cite{Berenstein_Gay_1995_complex} and the references therein.
 Braun, Meise and Taylor studied in \cite{Braun_1987_weighted}, \cite{Meise_1985_sequence} and \cite{Meise_Taylor_1987_sequence} the structure of
 (complemented) ideals in these algebras. As an example, when $p(z)=|z|^a$, then $A_p$ consists of all entire functions of order $a$ and finite type or order less than $a,$ and $A^0_p$ is the space of all entire functions of order at most $a$ and type $0$. For $a=1$, $A_p$ is the space of all entire functions of exponential type, and $A^0_p$ is the space of  entire functions of infraexponential type.

The following Lemma, that is a consequence of condition $(\beta)$ for the growth condition $p$, is well known.

\begin{lemma}\label{pol}
If $p: \mathbb{C} \rightarrow [0, \infty [$ is a growth condition, then there are $M>0$ and $s>0$ such that $p(r) \leq M r^{s} + M$ for each $r \in [0,\infty[$.
\end{lemma}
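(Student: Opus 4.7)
The plan is to extract explicit constants from condition $(\beta)$ and iterate the resulting doubling inequality. The statement $(\beta)$ that $p(2|z|)=O(p(|z|))$ as $|z|\to\infty$ furnishes, by definition, some $K\geq 1$ and $r_0\geq 1$ with $p(2r)\leq K\,p(r)$ for all $r\geq r_0$. After increasing $r_0$ if necessary I may also assume $p(r_0)\geq 1$, which will be convenient at the end when absorbing the additive constant.

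Next I would iterate. For $r\geq r_0$, choose the unique $n\in\N$ with $2^{n}r_0\leq r<2^{n+1}r_0$. Since $p$ is non-decreasing in $|z|$,
$$
p(r)\;\leq\;p(2^{n+1}r_0)\;\leq\;K^{n+1}\,p(r_0).
$$
From $2^n\leq r/r_0$ I get $n\leq \log_2(r/r_0)$ and hence $K^{n}\leq(r/r_0)^{\log_2 K}$. Setting $s:=\log_2 K\geq 0$ (and replacing $s$ by a strictly positive number if $K=1$, so the claim remains correct), this gives
$$
p(r)\;\leq\;K\,p(r_0)\,(r/r_0)^{s}\;=\;C\,r^{s},\qquad r\geq r_0,
$$
with $C:=K\,p(r_0)/r_0^{s}$.

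Finally I handle the compact interval $[0,r_0]$. Because $p$ is continuous and radial, $p$ is bounded on $[0,r_0]$ by some constant $M_0$. Taking $M:=\max\{C,M_0\}$ and using that $Mr^{s}\geq 0$ for all $r$, I obtain $p(r)\leq Mr^{s}+M$ on all of $[0,\infty[$.

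The only substantive step is the iteration, which is entirely routine; no real obstacle is anticipated, only minor bookkeeping to ensure that $s$ is strictly positive (trivially adjusted by replacing $s$ with a larger exponent, using $r^s\leq r^{s'}$ for $r\geq 1$ together with boundedness on $[0,1]$) and that the constants $C$ and $M_0$ are absorbed into a single $M$.
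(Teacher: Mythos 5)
Your proof is correct and is precisely the standard iteration of the doubling inequality from condition $(\beta)$ that the paper has in mind when it dismisses this lemma as ``well known'' without proof; the bookkeeping (choice of $n$ with $2^{n}r_0\leq r<2^{n+1}r_0$, the bound $K^{n}\leq (r/r_0)^{\log_2 K}$, boundedness on $[0,r_0]$ by continuity, and the adjustment ensuring $s>0$) is all in order.
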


\begin{proposition} \label{VolterraHormandercharact} Let $g$ be an entire function.

\begin{itemize}
\item[(i)] $V_g : A_p \rightarrow A_p$ is continuous if and only if $g \in A_p$.

\item[(ii)] $V_g : A^0_p \rightarrow A^0_p$ is continuous if and only if $g \in A^0_p$.

\end{itemize}

\end{proposition}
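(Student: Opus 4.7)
The plan is to split the assertion into necessity and sufficiency and, for sufficiency, to decompose $V_g$ into building blocks whose continuity is already known from the discussion preceding the statement.

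For necessity, I would evaluate $V_g$ at the constant function $1$, which is a polynomial and hence belongs to both $A_p$ and $A^0_p$. A direct computation gives $V_g(1)(z)=g(z)-g(0)$, so if $V_g$ maps $A_p$ (respectively $A^0_p$) continuously into itself, then $g-g(0)$, and therefore $g$ itself, lies in that algebra, since the constants do.

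For sufficiency, the key point is the factorisation $V_g = J \circ M_{g'}$, where $M_{g'}$ denotes multiplication by $g'$. The continuity of the integration operator $J$ on both $A_p$ and $A^0_p$ was recalled just before the statement, and the continuity of the differentiation operator $D$ on both spaces ensures that $g\in A_p$ (resp.\ $g\in A^0_p$) implies $g'=Dg$ lies in the same space. It therefore suffices to check that multiplication $M_h$ is continuous on $A_p$ (resp.\ $A^0_p$) whenever $h$ belongs to the same algebra.

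The multiplication step is a straightforward estimate using that both spaces are algebras. For $A^0_p$, whose topology is generated by the seminorms $\|f\|_\varepsilon = \sup_{z\in\C}|f(z)|\exp(-\varepsilon p(z))$, $\varepsilon>0$, the inequality $\|hf\|_\varepsilon \leq \|h\|_{\varepsilon/2}\,\|f\|_{\varepsilon/2}$ immediately gives continuity of $M_h$ on the Fr\'echet space $A^0_p$. For $A_p$, realised as the (DFN) inductive limit of the Banach spaces $H_A=\{f:\sup_z |f(z)|\exp(-Ap(z))<\infty\}$, the analogous estimate shows that $M_h$ maps $H_A$ continuously into $H_{A+B}$ when $h\in H_B$, which by the universal property of the inductive limit delivers continuity of $M_h$ on $A_p$. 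I do not foresee a genuine obstacle here; the only small technicality is respecting the inductive-limit topology of $A_p$, and this reduces to the level-by-level estimate just described.
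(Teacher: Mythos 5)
Your proof is correct and follows essentially the same route as the paper: the factorisation $V_g = J\circ M_{g'}$ together with the continuity of $J$ and $D$ on both algebras. The only cosmetic differences are that you obtain necessity by evaluating $V_g$ at the constant $1$ directly (the paper instead uses $D\circ V_g=M_{g'}$ and tests the multiplier $M_{g'}$ on constants), and that you spell out the level-by-level continuity of multiplication, which the paper subsumes under the remark that $A_p$ and $A^0_p$ are topological algebras.
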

\begin{proof}
We show (i), the proof of (ii) being similar. The identities $D \circ V_g = M_{g'}$, $V_g = J \circ M_{g'}$ and the continuity of $D$ and $J$ on $A_p$ imply that $V_g : A_p \rightarrow A_p$ is continuous if and only the operator of multiplication $M_{g'}: A_p \rightarrow A_p$ is continuous. Since $A_p$ is an algebra containing the constants, this holds if and only if $g' \in A_p$. We can apply again that $D$ and $J$ are continuous on $A_p$ to conclude that this statement is equivalent to $g \in A_p$.
\end{proof}

The spectrum of the integration operator $J$ on $A_p$ and $A^0_p$ was investigated in \cite[Proposition 4.7]{BeBoF2}. This result, that  corresponds to $V_g$ for $g(z)=z$, is extended in this section.

\begin{lemma}\label{caratheodory}
Let $p: \mathbb{C} \rightarrow [0, \infty [$ be a growth condition and let $h$ be an entire function.
\begin{itemize}

\item[(i)] The function $e^h$ belongs to $A_p$ if and only if $M(h,r)=O(p(r))$ as $r \rightarrow \infty$. If this is the case, then $h$ is a polynomial.

\item[(ii)] The function $e^h$ belongs to $A^0_p$ if and only if $M(h,r)=o(p(r))$ as $r \rightarrow \infty$. If this is the case, then $h$ is a polynomial.

\end{itemize}
\end{lemma}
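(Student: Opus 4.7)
The plan is to treat the two characterizations in parallel, since each has an easy direction (from a pointwise growth bound on $h$ to membership of $e^h$) and a hard direction that requires passing from a bound on $\text{Re}(h)$ to a bound on $|h|$.

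For the easy direction in (i), if $M(h,r) = O(p(r))$, then for $|z|=r$ large, $|e^{h(z)}| \leq e^{|h(z)|} \leq e^{M(h,r)} \leq e^{Cp(r)}$, so $e^h \in A_p$; the easy direction of (ii) is identical, replacing $O$ by $o$ and using an arbitrary $\varepsilon > 0$ in place of the fixed $C$. For the nontrivial direction, I would start from $e^h \in A_p$, which gives constants $A,C>0$ with $|e^{h(z)}| \leq C\exp(Ap(z))$, i.e. $\text{Re}(h(z)) \leq Ap(z) + \log C$ for all $z$. The standard tool to convert such a one-sided bound into a two-sided bound on $|h|$ is the Borel--Carath\'eodory inequality: for an entire function $h$ and $0<r<R$,
$$M(h,r) \leq \frac{2r}{R-r}\,\sup_{|z|=R}\text{Re}\,h(z) + \frac{R+r}{R-r}|h(0)|.$$
Taking $R=2r$ gives $M(h,r) \leq 2\sup_{|z|=2r}\text{Re}\,h(z) + 3|h(0)|$, and combining with the bound on $\text{Re}\,h$ and property $(\beta)$, which provides a constant $K$ with $p(2r) \leq Kp(r)$ for large $r$, yields $M(h,r) \leq 2AKp(r) + O(1) = O(p(r))$, using that $p(r)\to\infty$ by $(\alpha)$.

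For part (ii), I would rerun the same computation but with the key observation that $e^h \in A^0_p$ means for every $\varepsilon>0$ there is $C_\varepsilon$ with $\text{Re}\,h(z) \leq \varepsilon p(z) + \log C_\varepsilon$. Borel--Carath\'eodory at $R=2r$ then gives $M(h,r) \leq 2\varepsilon K p(r) + O_\varepsilon(1)$ for large $r$, hence $\limsup_{r\to\infty} M(h,r)/p(r) \leq 2\varepsilon K$, and letting $\varepsilon \to 0$ gives $M(h,r) = o(p(r))$ as required.

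Finally, for the polynomial conclusion in both parts, note that $M(h,r) = O(p(r))$ already in (ii) (since $o$ implies $O$), and Lemma \ref{pol} furnishes $M,s>0$ with $p(r) \leq Mr^s + M$. Therefore $M(h,r) = O(r^s + 1)$, and by the standard Cauchy-estimate argument, the Taylor coefficients $h_k$ of $h$ satisfy $|h_k| \leq M(h,r)/r^k \to 0$ for all $k > s$, so $h$ is a polynomial (of degree at most $\lfloor s \rfloor$). The main subtlety in the whole argument is the epsilon-handling in (ii), where one must verify that the $O(1)$ error term does not overwhelm the $\varepsilon p(r)$ main term; this is precisely guaranteed by condition $(\alpha)$, which forces $p(r)\to \infty$.
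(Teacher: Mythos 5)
Your proof of part (i) is essentially the paper's: both convert the one-sided bound $\mathrm{Re}\,h(z)\le Ap(z)+O(1)$ into $M(h,r)=O(p(r))$ via the Borel--Carath\'eodory inequality at radius $R=2r$ together with condition $(\beta)$, and both get the polynomial conclusion from Lemma \ref{pol} plus Cauchy estimates. Where you genuinely diverge is part (ii): the paper does not redo the computation but instead quotes a structural result of Meise and Taylor (their Lemma 2.3), namely that $e^h\in A^0_p$ if and only if $e^h\in A_q$ for some growth condition $q$ with $q(r)=o(p(r))$, and then applies part (i) to $q$ to get $M(h,r)=O(q(r))=o(p(r))$. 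You instead rerun Borel--Carath\'eodory directly with the family of bounds $\mathrm{Re}\,h(z)\le\varepsilon p(z)+\log C_\varepsilon$, obtaining $\limsup_r M(h,r)/p(r)\le 2\varepsilon K$ for every $\varepsilon>0$ and letting $\varepsilon\to 0$; the constant term is absorbed because $(\alpha)$ forces $p(r)\to\infty$, as you correctly note. Your route is more elementary and self-contained (it avoids the external lemma about interpolating growth conditions), at the cost of a small amount of $\varepsilon$-bookkeeping; the paper's route is shorter given the reference. Both arguments are correct.
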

\begin{proof}
(i) If $M(h,r)=O(p(r))$ as $r \rightarrow \infty$, then $e^h$ clearly belongs to $A_p$. To prove the converse, denote by $A(h,r)$ the maximum of the real part of $h$ in the circle $|z| \leq r$ for $r>0$. As $e^h \in A_p$, there is $C>0$ such that $A(h,r) \leq C p(r) + C$ for each $r > 0$. We apply Carath\'eodory inequality for the circle $|z| \leq 2r$ (c.f.\ \cite[Theorem 6.8]{Levin}) and property $(\beta)$ of $p$ to get
$$
M(h,r) \leq 2(A(h,2r) - {\rm Re} h(0)) + |h(0)| \leq 2Cp(2r) + 2(C + |h(0)|) \leq C' p(r) + C'.
$$
Now Lemma \ref{pol} and a standard argument imply that $h$ is a polynomial.

(ii) By \cite[Lemma 2.3]{Meise_Taylor_1987_sequence}, $e^{h} \in A^0_p$ if and only if there is a growth condition
$q$ such that $e^{h} \in A_{q}$ and $q(r) = o(p(r)), r \rightarrow \infty$. By part (i), $M(h,r) = O(q(r))=o(p(r)), r \rightarrow \infty$.
The reverse implication is trivial.

\end{proof}

\begin{theorem}\label{VolterraAp}
Let $p: \mathbb{C} \rightarrow [0, \infty [$ be a growth condition and let $g \in A_p$ be non-constant.

\begin{itemize}

\item[(i)] If  $M(g,r) = O(p(r)), r \rightarrow \infty,$ is not satisfied (which happens in particular when $p(r)=o(r), r \rightarrow \infty$), then $\sigma(V_g,A_p) = \C$.

\item[(ii)] If  $M(g,r) = O(p(r)), r \rightarrow \infty$, then $\sigma(V_g,A_p) = \{ 0 \}$. In this case $g$
is a polynomial and $r=O(p(r)), r \rightarrow \infty$.

\end{itemize}

Moreover, in both cases we have $\sigma(V_g,A_p) = \{ 0 \} \cup \overline{\{ \lambda \in \C \setminus \{ 0 \} \ | \ e^{\frac{g}{\lambda}} \notin A_p \}} $.
\end{theorem}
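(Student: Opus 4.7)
The plan is to combine Lemma \ref{caratheodory}, which characterizes when $e^{g/\lambda}$ lies in $A_p$, with the abstract framework of Lemmas \ref{subsetspectrumgeneral}, \ref{charactresolvent} and \ref{integration by parts}. The trichotomy $M(g,r)=O(p(r))$ vs.\ not splits the problem cleanly into the two cases, and in case (ii) the substantive work is an estimate for an explicit integral operator that plays the role of Lemma \ref{integraltechnical} in the inductive-limit setting.

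For part (i), I would first observe that if $M(g,r)$ is not $O(p(r))$, then neither is $M(g/\lambda,r) = M(g,r)/|\lambda|$ for any $\lambda\neq 0$, so by Lemma \ref{caratheodory}(i) we have $e^{g/\lambda}\notin A_p$ for every $\lambda\in\C\setminus\{0\}$. Lemma \ref{subsetspectrumgeneral} then forces $\C\subset \sigma(V_g,A_p)$, hence equality. For the parenthetical assertion I would argue separately: if $g$ is a non-constant polynomial of degree $n\geq 1$ then $M(g,r)\geq cr$ for large $r$, which cannot be $O(p(r))$ when $p(r)=o(r)$; if $g$ is not a polynomial, the contrapositive of Lemma \ref{caratheodory}(i) already rules out $M(g,r)=O(p(r))$.

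For part (ii), Lemma \ref{caratheodory}(i) forces $g$ to be a polynomial, necessarily of some degree $n\geq 1$; then $M(g,r)\geq cr^n \geq cr$ for large $r$ gives $r=O(p(r))$. Proposition \ref{elementarypt} supplies $0\in\sigma(V_g,A_p)$ and the absence of eigenvalues. For each $\lambda\neq 0$ I would apply Lemma \ref{charactresolvent}: condition (a), $e^{g/\lambda}\in A_p$, is immediate from Lemma \ref{caratheodory}(i); for condition (b), by Lemma \ref{integration by parts} it suffices to show that
$$
T h(z) = e^{g(z)/\lambda} \int_0^z h(\zeta) g'(\zeta) e^{-g(\zeta)/\lambda}\, d\zeta
$$
is continuous on the subspace $(A_p)_0$ of functions vanishing at $0$. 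Parametrizing $\zeta = tz$ with $t\in[0,1]$, the crude bound $|g(z)-g(tz)|\leq 2M(g,|z|)$ yields $|e^{(g(z)-g(tz))/\lambda}|\leq e^{2M(g,|z|)/|\lambda|}$; combining this with $p(tz)\leq p(z)$, the Cauchy estimate $|z|\,|g'(tz)|\leq M(g,2|z|)$, property $(\beta)$, and the hypothesis $M(g,r)\leq K(p(r)+1)$, one obtains for $h$ in the level space $A_{p,A}=\{f : \|f\|_A := \sup|f(z)|e^{-Ap(z)}<\infty\}$ an inequality of the shape
$$
|Th(z)|\leq C'\,(p(|z|)+1)\,e^{(A+2K/|\lambda|)p(z)}.
$$
Property $(\alpha)$ then absorbs the polynomial prefactor at the cost of an arbitrarily small increase in the exponent, giving the continuity $T:A_{p,A}\to A_{p,B}$ for suitable $B$, and hence the continuity of $T$ on $A_p$.

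The ``moreover'' formula then follows by inspection in each case: in (i) we have $e^{g/\lambda}\notin A_p$ for every $\lambda\neq 0$, so the set inside the closure is $\C\setminus\{0\}$ and its closure is $\C$; in (ii) we have $e^{g/\lambda}\in A_p$ for every $\lambda\neq 0$, so the set is empty and the right-hand side reduces to $\{0\}$. I expect the $T$-estimate in (ii) to be the main obstacle, since, unlike the Banach case treated by Lemma \ref{integraltechnical}, $A_p$ carries no single weight with a convenient monotonicity; the crucial point is that the inductive-limit topology of $A_p$ gives the freedom to tolerate an $e^{O(p)}$ loss coming from $e^{(g(z)-g(tz))/\lambda}$ simply by passing to a larger level $B$.
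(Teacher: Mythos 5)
Your argument is correct, and parts (i) and the ``moreover'' statement follow the paper essentially verbatim; the difference is in part (ii). There the paper does not estimate any integral at all: writing $G=e^{g/\lambda}$ and noting that $M(-g,r)=M(g,r)=O(p(r))$, so that both $G$ and $1/G=e^{-g/\lambda}$ lie in $A_p$, it factorizes $S_{\lambda,g}=M_G\circ J\circ M_{1/G}\circ D$ and invokes the facts that $A_p$ is a topological algebra (so $M_G$ and $M_{1/G}$ are continuous) and that $D$ and $J$ are continuous on $A_p$; continuity of $S_{\lambda,g}$, hence $\lambda\in\rho(V_g,A_p)$, is then immediate from Lemma \ref{charactresolvent}, with no need for Lemma \ref{integration by parts}. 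Your route instead passes to the operator $T$ of Lemma \ref{integration by parts} and proves a quantitative bound $T:A_{p,A}\to A_{p,B}$ with $B=A+2K/|\lambda|+\eps$, in the spirit of Lemma \ref{integraltechnical}; the estimate is sound (the prefactor $|z|\,|g'(tz)|\leq M(g,2|z|)\leq K'(p(|z|)+1)$ is indeed absorbed, via Lemma \ref{pol} and property $(\alpha)$, or simply via $t+1\leq C_\eps e^{\eps t}$), and continuity on the inductive limit follows since it holds step-into-step. What the paper's factorization buys is brevity and the observation that the whole of case (ii) is a formal consequence of the algebra structure; what your computation buys is an explicit control of the loss of level $A\mapsto A+2K/|\lambda|$, and it would survive in settings where one does not know a priori that multiplication operators are continuous. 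Your treatment of the parenthetical claim in (i) and of the assertions ``$g$ is a polynomial and $r=O(p(r))$'' in (ii) is also correct, if slightly more case-split than the paper's one-line deductions from Lemma \ref{caratheodory}.
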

\begin{proof}
First observe that $M(g/\lambda,r)=(1/|\lambda|)M(g,r)$ for each $\lambda \in \C \setminus \{ 0 \}$ and $r>0$. Therefore, it follows from
Lemma \ref{caratheodory} (i) that $e^{\frac{g}{\lambda}} \in A_p$ for some (all) $ \lambda \neq 0,$ if and only if $e^g \in A_p$, that is equivalent to
$M(g,r) = O(p(r))$ as $r \rightarrow \infty$.

(i) If $M(g,r) = O(p(r))$ as $r \rightarrow \infty$ is not satisfied, then $e^{\frac{g}{\lambda}} \notin A_p$ for each $\lambda \neq 0$
by our remarks above. We can apply Lemma \ref{subsetspectrumgeneral} to conclude $\sigma(V_g,A_p) = \C$.

Observe that in case $p(r)=o(r), r \rightarrow \infty$, then $M(g,r) = O(p(r))$ as $r \rightarrow \infty$ is not satisfied,
since otherwise we would have $M(g,r) = O(p(r)) =o(r)$ as $r \rightarrow \infty$, that implies that $g$ is constant; a contradiction.

(ii) If $M(g,r) = O(p(r)), r \rightarrow \infty$, then $e^{\frac{g}{\lambda}} \in A_p$ for each $ \lambda \neq 0$. Given $\lambda \in \C\setminus \{ 0 \}$,  setting $G=e^{\frac{g}{\lambda}}, \ 1/G=e^{-\frac{g}{\lambda}}$, the operator $S_{\lambda,g}$ of Lemma \ref{charactresolvent} (iii) satisfies $S_{\lambda,g} = M_G \circ J \circ M_{1/G} \circ D$. These four operators are continuous on the algebra $A_p$. Therefore $\lambda \in \rho(V_g,A_p)$, and $\sigma(V_g,A_p) = \{ 0 \}$.

In this case, since $g$ must be a non constant polynomial, the assumption in (ii) implies $r=O(p(r)), r \rightarrow \infty$.

\end{proof}

\begin{theorem}\label{VolterraA0p}
Let $p: \mathbb{C} \rightarrow [0, \infty [$ be a growth condition and let $g \in A^0_p$ be non-constant.

\begin{itemize}

\item[(i)] If $M(g,r) = o(p(r)), r \rightarrow \infty,$ is not satisfied (which happens in case $p(r)=O(r), r \rightarrow \infty$), then $\sigma(V_g,A^0_p) = \C$.

\item[(ii)] If   $M(g,r) = o(p(r)), r \rightarrow \infty$, then $\sigma(V_g,A^0_p) = \{ 0 \}$. In this case $g$ is a polynomial and $r=o(p(r)), r \rightarrow \infty$.

\end{itemize}

Moreover, in both cases, we have $\sigma(V_g,A^0_p) = \{ 0 \} \cup \overline{\{ \lambda \in \C \setminus \{ 0 \} \ | \ e^{\frac{g}{\lambda}} \notin A^0_p \}} $.
\end{theorem}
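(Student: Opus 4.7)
The plan is to mirror the proof of Theorem \ref{VolterraAp} step by step, replacing Lemma \ref{caratheodory}(i) by part (ii) and using that $A^0_p$ is a nuclear Fr\'echet algebra rather than a (DFN)-algebra. The key observation is that for any $\lambda \in \C \setminus \{0\}$ one has $M(g/\lambda, r) = (1/|\lambda|) M(g, r)$, so by Lemma \ref{caratheodory}(ii) the condition $e^{g/\lambda} \in A^0_p$ is equivalent to $M(g, r) = o(p(r))$ as $r \to \infty$, and this equivalence is independent of the particular $\lambda \neq 0$.

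For part (i), if $M(g, r) = o(p(r))$ fails then $e^{g/\lambda} \notin A^0_p$ for every $\lambda \neq 0$, and Lemma \ref{subsetspectrumgeneral} yields $\sigma(V_g, A^0_p) \supset \{0\} \cup (\C \setminus \{0\}) = \C$. The parenthetical remark is immediate: if both $p(r) = O(r)$ and $M(g,r) = o(p(r))$ held, then $M(g, r) = o(r)$, which would force $g$ to be constant, a contradiction.

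For part (ii), assume $M(g,r) = o(p(r))$. Lemma \ref{caratheodory}(ii) then shows that $g$ is a polynomial and $e^{\pm g/\lambda} \in A^0_p$ for every $\lambda \neq 0$. Given such $\lambda$, set $G := e^{g/\lambda}$, so $1/G = e^{-g/\lambda}$; both belong to $A^0_p$. The operator $S_{\lambda,g}$ of Lemma \ref{charactresolvent}(iii) factors as $S_{\lambda,g} = M_G \circ J \circ M_{1/G} \circ D$, where $D$ and $J$ are continuous on $A^0_p$ (see the remarks before Proposition \ref{VolterraHormandercharact}) and $M_G$, $M_{1/G}$ are continuous because $A^0_p$ is a topological algebra. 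Hence $\lambda \in \rho(V_g, A^0_p)$ by Lemma \ref{charactresolvent}, so $\sigma(V_g, A^0_p) = \{0\}$. Since $g$ is a non-constant polynomial one has $M(g, r) \geq c\,r$ for large $r$, so $r = o(p(r))$ follows from $M(g,r) = o(p(r))$.

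The \lqq moreover\rqq\ formula then follows by inspection: in case (i) the set inside the closure equals $\C \setminus \{0\}$, whose closure is $\C$; in case (ii) it is empty. I do not foresee any real obstacle; the only point warranting minor care is verifying that continuity of the multiplication operators $M_G, M_{1/G}$ in the projective-limit topology of $A^0_p$ comes for free from the fact that $A^0_p$ is a topological algebra, which it does.
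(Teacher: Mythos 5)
Your proposal is correct and follows essentially the same route as the paper: the same reduction via $M(g/\lambda,r)=(1/|\lambda|)M(g,r)$ and Lemma \ref{caratheodory}(ii), Lemma \ref{subsetspectrumgeneral} for part (i), and the factorization $S_{\lambda,g}=M_G\circ J\circ M_{1/G}\circ D$ with Lemma \ref{charactresolvent} for part (ii). The extra remarks you add (joint continuity of multiplication in the Fr\'echet algebra $A^0_p$, and $M(g,r)\geq cr$ for a non-constant polynomial) are exactly the points the paper leaves implicit.
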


\begin{proof}
Since $M(g/\lambda,r)=(1/|\lambda|)M(g,r)$ for each $\lambda \in \C \setminus \{ 0 \}$ and $r>0$, Lemma \ref{caratheodory} (ii) implies
that $e^{\frac{g}{\lambda}} \in A^0_p$ for some (or all) $\lambda \neq 0$ if and only if $e^g \in A^0_p$.

(i) If $M(g,r) = o(p(r))$ as $r \rightarrow \infty$ is not satisfied, then $e^{\frac{g}{\lambda}} \notin A^0_p$ for each $\lambda \neq 0$
by our comments above. By Lemma \ref{subsetspectrumgeneral} we have $\sigma(V_g,A_p) = \C$.

Observe that in case $p(r)=O(r), r \rightarrow \infty$, then $M(g,r) = o(p(r))$ as $r \rightarrow \infty$ does not hold.
Otherwise, $M(g,r) = o(p(r)) =O(r)$ as $r \rightarrow \infty$, which implies that $g$ is constant; a contradiction.

(ii) If $M(g,r) = o(p(r)), r \rightarrow \infty$, then $e^{\frac{g}{\lambda}} \in A^0_p$ for each $ \lambda \neq 0$ by our comments above. Given $\lambda \in \C\setminus \{ 0 \}$,  setting $G=e^{\frac{g}{\lambda}}$, the operator $S_{\lambda,g}$ of Lemma \ref{charactresolvent} (iii) satisfies $S_{\lambda,g} = M_G \circ J \circ M_{1/G} \circ D$ and is continuous on the algebra $A^0_p$. Therefore $\lambda \in \rho(V_g,A^0_p)$.

Since $g$ must be a non constant polynomial, the assumption in (ii) yields $r=o(p(r))$ as $r \rightarrow \infty$.

\end{proof}

\vspace{.3cm}

\textbf{Acknowledgement.} This research was partially supported by MINECO Project MTM2013-43540-P and by GV Project Prometeo II/2013/013.



\noindent \textbf{Author's address:}%
\vspace{\baselineskip}%

Jos\'e Bonet: Instituto Universitario de Matem\'{a}tica Pura y Aplicada IUMPA,
Universitat Polit\`{e}cnica de Val\`{e}ncia,  E-46071 Valencia, Spain

email: jbonet@mat.upv.es \\

\end{document}